\begin{document}

\title{Optimal additive quaternary codes of low dimension}
\author{J\"urgen Bierbrauer\\
Department of Mathematical Sciences\\
Michigan Technological University\\
Houghton, Michigan 49931 (USA) \\ \\
S. Marcugini\footnote{The research of  S. Marcugini and F.~Pambianco was
 supported in part by the Italian
National Group for Algebraic and Geometric Structures and their Applications (GNSAGA - INDAM) and by
University of Perugia (Project: Curve, codici e configurazioni di punti, Base Research
Fund 2018).}    \,and F. Pambianco$^*$\\
Dipartimento di Matematica e Informatica\\
Universit\`a degli Studi di Perugia\\
Perugia (Italy) }

\maketitle
\newtheorem{Theorem}{Theorem}
\newtheorem{Proposition}{Proposition}
\newtheorem{Lemma}{Lemma}
\newtheorem{Definition}{Definition}
\newtheorem{Corollary}{Corollary}
\newtheorem{Example}{Example}
\def\nz{\mathbb{N}}
\def\gz{\mathbb{Z}}
\def\rz{\mathbb{R}}
\def\ef{\mathbb{F}}
\def\CC{\mathbb{C}}
\def\o{\omega}
\def\p{\overline{\omega}}
\def\e{\epsilon}
\def\a{\alpha}
\def\b{\beta}
\def\g{\gamma}
\def\d{\delta}
\def\l{\lambda}
\def\s{\sigma}
\def\bsl{\backslash}
\def\la{\longrightarrow}
\def\arr{\rightarrow}
\def\ov{\overline}
\def\sm{\setminus}
\newcommand{\D}{\displaystyle}
\newcommand{\T}{\textstyle}

\begin{abstract}
An additive quaternary $[n,k,d]$-code (length $n,$ quaternary dimension $k,$ minimum distance $d$)
is a $2k$-dimensional $\ef_2$-vector space of $n$-tuples with entries in $Z_2\times Z_2$
(the $2$-dimensional vector space over $\ef_2$) with minimum Hamming distance $d.$
We determine the optimal parameters of additive quaternary codes of dimension $k\leq 3.$
The most challenging case is dimension $k=2.5.$ We prove that 
an additive quaternary $[n,2.5,d]$-code where $d<n-1$ exists if and only if 
$3(n-d)\geq \lceil d/2\rceil +\lceil d/4\rceil +\lceil d/8\rceil$.
In particular we construct new optimal $2.5$-dimensional additive quaternary codes.
As a by-product we give a direct proof for the fact that a binary linear $[3m,5,2e]_2$-code for $e<m-1$ exists if and only if
the Griesmer bound $3(m-e)\geq \lceil e/2\rceil +\lceil e/4\rceil+\lceil e/8\rceil$ is satisfied.
\end{abstract}

\indent Keywords: Quaternary additive codes, projective spaces, optimal codes, binary linear codes.

\section{Introduction}
\label{introsection}

The concept of additive codes is a far-reaching and natural generalization of linear codes,
see \cite{book2nded}, Chapter 18. Here we restrict to the quaternary case.

\begin{Definition}
\label{addquatdef}
Let $k$ be such that $2k$ is a positive integer. An additive quaternary $[n,k]$-code $C$ 
(length $n,$ dimension $k$) is a $2k$-dimensional subspace of $\ef_2^{2n}$
where the coordinates come in pairs of two. We
view the codewords as $n$-tuples where the coordinate entries are elements of $\ef_2^{2}$
and use the Hamming distance.
\end{Definition}

 We write the parameters of the code as $[n,k,d]$ where $d$ is the minimum Hamming distance.
Here $k$ is the quaternary dimension. As an example, in case $k=2.5$ the code is a $5$-dimensional
vector space over $\ef_2.$
Additive codes are particularly interesting because of a link to quantum stabilizer codes, see 
\cite{BFGMPquantgeom,noquant1354,CRSS}.
We will also use the geometric construction of additive quaternary codes. In fact, a quaternary $[n,k,d]$-code
is equivalent to a multiset of $n$ lines in $PG(2k-1,2)$ such that each hyperplane of $PG(2k-1,2)$
contains at most $s=n-d$ of those lines, in the multiset sense.
Blokhuis and Brouwer \cite{BB} first suggested the problem of determining the optimum parameters
of additive quaternary codes. In earlier work we determined all such optimal parameters when $n\leq 13,$
see \cite{book2nded}, Chapter 18 and \cite{addJCTA08}. For further results concerning larger lengths see
\cite{no1559,addellipt}.
In the present work we determine all optimal parameters when the
quaternary dimension is $k\leq 3.$ Dimensions $k\leq 2$ are degenerate cases, see Section \ref{k=2section}.
Dimension $3$ is easily dealt with as well, see Section \ref{k=3section}. Our main result is Theorem \ref{k=2.5theorem}
in Section \ref{k=2.5section} where the optimal parameters of $2.5$-dimensional additive quaternary codes are
determined.
For $k>1$ we prefer to work with the {\bf species} $s=n-d$ instead of the minimum distance $d.$
Define $n_k(s)$ to be the maximal length $n$ such that an additive $[n,k,n-s]$-code exists.
For integer $k,$ let $n_{k,lin}(s)$ be the maximal $n$ such that a linear quaternary $[n,k,n-s]_4$-code exists.
In the present paper we determine $n_k(s)$ for $k\leq 3$ and all $s.$
The following obvious lemma will be used to prove nonexistence results:

\begin{Lemma}
\label{concatlemma}
The concatenation of a quaternary additive $[n,k,d]$-code and the binary linear $[3,2,2]_2$-code is a
binary linear $[3n,2k,2d]_2$-code. 
\end{Lemma}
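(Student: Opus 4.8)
The plan is to make the concatenation construction completely explicit and then verify the three parameters (length, dimension, minimum distance) directly. The binary linear $[3,2,2]_2$-code is the code whose three coordinate functionals on $\ef_2^2$ are the three nonzero linear forms; equivalently it is the map $\phi\colon \ef_2^2\arr\ef_2^3$ sending a pair $(a,b)$ to $(a,b,a+b)$. The key property I would record first is that $\phi$ is $\ef_2$-linear and injective, and that every nonzero element of $\ef_2^2$ has image of Hamming weight exactly $2$, while $(0,0)$ maps to weight $0$. In other words, a single quaternary symbol is ``empty'' iff its $3$-bit encoding is empty, and a nonzero symbol contributes exactly two $1$'s.

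Next I would describe the concatenated code. Given an additive $[n,k,d]$-code $C\subseteq (\ef_2^2)^n$, apply $\phi$ coordinatewise to obtain a map $\Phi\colon (\ef_2^2)^n\arr \ef_2^{3n}$, $\Phi(c_1,\dots,c_n)=(\phi(c_1),\dots,\phi(c_n))$, and let $C'=\Phi(C)$. Since $\Phi$ is built from the linear injective $\phi$ in each block, it is itself $\ef_2$-linear and injective. I would then read off the three parameters. The length is $3n$, immediate from the construction. For the dimension, injectivity of $\Phi$ gives $\dim_{\ef_2}C'=\dim_{\ef_2}C=2k$, which is exactly the binary dimension declared for a $[3n,2k,\cdot]_2$-code. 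So far everything is routine bookkeeping.

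The minimum-distance computation is the only step requiring a small argument, and it is where I would be most careful, though it is still not hard. The crucial observation is that $\Phi$ multiplies Hamming weight by exactly $2$: for a codeword $c=(c_1,\dots,c_n)$, its quaternary Hamming weight $\mathrm{wt}(c)$ counts the indices $i$ with $c_i\neq 0$, and by the weight property of $\phi$ recorded above, each such coordinate contributes exactly $2$ to the binary weight of $\Phi(c)$ while each zero coordinate contributes $0$. Hence $\mathrm{wt}_2(\Phi(c))=2\,\mathrm{wt}(c)$ for every $c\in C$. Taking the minimum over nonzero codewords (using injectivity so that nonzero maps to nonzero) yields minimum distance $2d$ for $C'$. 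Because $C'$ is linear, its minimum distance equals its minimum weight, so this completes the verification.

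Thus the concatenation is a binary linear $[3n,2k,2d]_2$-code, as claimed. The ``main obstacle'' is really just isolating the weight-doubling property of the inner code $\phi$; once that is stated, length, dimension, and distance all follow mechanically, which is why the lemma is called obvious. I would present the weight identity $\mathrm{wt}_2(\Phi(c))=2\,\mathrm{wt}(c)$ as the central equation and let the three parameter claims fall out of it together with the linearity and injectivity of $\Phi$.
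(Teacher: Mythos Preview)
Your argument is correct: the inner code $\phi(a,b)=(a,b,a+b)$ is $\ef_2$-linear, injective, and sends every nonzero pair to a weight-$2$ triple, so the coordinatewise map $\Phi$ is linear, injective, and satisfies $\mathrm{wt}_2(\Phi(c))=2\,\mathrm{wt}(c)$, yielding parameters $[3n,2k,2d]_2$. The paper gives no proof at all---it simply calls the lemma obvious---so your write-up is exactly the standard concatenation verification that the authors suppressed.
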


\section{Dimensions $k\leq 2.$}
\label{k=2section}

Clearly dimension $k=1$ is a trivial case, the optimal parameters being $[n,1,n].$
Dimension $k=1.5$ is degenerate as well. The ambient space is the Fano plane and the optimal choice is to use each of
its seven lines with multiplicity $s.$ This shows $n_{1.5}(s)=7s.$ The corresponding codes have parameters $[7s,1.5,6s].$
Dimension $k=2$ still is degenerate. In the linear case we have $n_{2,lin}(s)=5s.$ In fact we work in the projective line
$PG(1,4)$ and the optimal choice is to use each of its points with multiplicity $s.$ 

\begin{Proposition}
\label{k=2prop}
We have $n_2(s)=n_{2,lin}(s)=5s$ for all $s.$
\end{Proposition}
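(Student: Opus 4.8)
The plan is to establish the two inequalities $n_2(s)\geq 5s$ and $n_2(s)\leq 5s$ separately. Since every linear code is in particular additive, the inequality $n_2(s)\geq n_{2,lin}(s)=5s$ is immediate from the construction in $PG(1,4)$ already described above, in which each of the five points is taken with multiplicity $s.$ Hence the entire content of the proposition lies in the upper bound $n_2(s)\leq 5s,$ which I would prove by a double counting argument in the geometric model.

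By the geometric description of additive quaternary codes recalled in Section \ref{introsection}, an $[n,2,n-s]$-code is equivalent to a multiset $\mathcal{M}$ of $n$ lines in $PG(3,2)$ such that every hyperplane, that is, every plane, contains at most $s$ lines of $\mathcal{M}$ in the multiset sense. I would count in two ways the incident pairs consisting of a line of $\mathcal{M}$ and a plane containing it. In $PG(3,2)$ there are exactly $15$ planes, and every line lies in exactly $3$ of them, since the planes through a fixed line correspond to the points of a quotient $PG(1,2).$ Writing $m_L$ for the multiplicity in $\mathcal{M}$ of a line $L,$ the number of incident pairs equals $\sum_L 3 m_L = 3n$ when counted by lines, and equals the sum over the $15$ planes of the number of lines of $\mathcal{M}$ they contain, which is at most $15s$ by the hyperplane condition. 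Comparing the two counts gives $3n\leq 15s,$ hence $n\leq 5s.$

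Together the two bounds yield $n_2(s)=5s=n_{2,lin}(s),$ as claimed. The argument is elementary and I anticipate no serious obstacle; the only point demanding attention is that the bookkeeping be carried out consistently in the multiset sense, so that the incidence count on both sides correctly weights each line by its multiplicity $m_L.$
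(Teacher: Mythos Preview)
Your argument is correct. The lower bound is handled exactly as in the paper, and your upper bound via double counting in $PG(3,2)$ is valid: there are $15$ planes, each line lies in $3$ of them, so $3n\le 15s$.

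However, your route differs from the paper's. The paper does not argue geometrically for the upper bound; instead it assumes $n_2(s)>5s$, takes a putative $[5s+1,2,4s+1]$-code, concatenates with the $[3,2,2]_2$-code via Lemma~\ref{concatlemma} to obtain a binary linear $[15s+3,4,8s+2]_2$-code, and observes that this violates the Griesmer bound. Your approach is more self-contained and elementary---it avoids invoking the Griesmer bound altogether and stays entirely within the projective-space model. The paper's approach, on the other hand, is the same device used uniformly in Sections~\ref{k=3section} and~\ref{k=2.5section}, so it fits the overall narrative of reducing nonexistence questions to Griesmer via concatenation. Either argument settles the proposition with no difficulty.
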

\begin{proof}
Assume $n_2(s)>5s.$ We would have a $[5s+1,2,4s+1]$-code. Lemma \ref{concatlemma} would yield a binary linear
$[15s+3,4,8s+2]_2$-code. This contradicts the Griesmer bound.
\end{proof}  
    
\section{The case of dimension $k=3.$}
\label{k=3section}

The optimal parameters of linear quaternary $3$-dimensional codes are of course known: 

\begin{Proposition}
\label{k=3linprop}
We have $n_{3,lin}(2)=6, n_{3,lin}(3)=9, n_{3,lin}(4)=16,$
$$n_{3,lin}(5i)=21i, n_{3,lin}(5i+1)=21i+1\mbox{ and }
n_{3,lin}(5i+\s )=21i+1+5(\s -1)$$ 
for $i\geq 1, \s\in\lbrace 2,3,4\rbrace .$
\end{Proposition}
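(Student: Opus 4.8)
The plan is to pass to the geometric model: a linear quaternary $[n,3,n-s]_4$-code is the same thing as a multiset of $n$ points in the projective plane $PG(2,4)$, where each of the $21$ points $P$ carries a multiplicity $a_P\ge 0$ with $\sum_P a_P=n$, subject to the requirement that every one of the $21$ lines $\ell$ satisfy the line-sum condition $\sum_{P\in\ell}a_P\le s$. Determining $n_{3,lin}(s)$ then means maximising $n$ over all such multisets. For the upper bound I would first record a local inequality: fixing a point $P,$ the five lines through $P$ cover $P$ five times and every other point once, so the sum of their five line-sums equals $n+4a_P;$ since each summand is at most $s$ this gives $n+4a_P\le 5s,$ that is $a_P\le\lfloor (5s-n)/4\rfloor$ for every $P.$ Summing this over the $21$ points yields the single inequality $n\le 21\lfloor (5s-n)/4\rfloor.$

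Writing $s=5i+\sigma$ with $\sigma\in\{0,1,2,3,4\},$ I would simply test the candidate extremal lengths against this inequality. It is satisfied at $n=21i$ (for $\sigma=0$), at $21i+1$ (for $\sigma=1$), and at $21i+1+5(\sigma-1)$ (for $\sigma\in\{2,3,4\}$), but is violated by the next larger value of $n$ in each case; the improvement over the crude averaging bound $n\le 21s/5$ comes entirely from the floor. This reproduces the Griesmer bound and proves all the claimed upper bounds, with one exception: in the single case $s=3$ (that is, $\sigma=3,\,i=0$) the inequality only delivers $n\le 11,$ whereas the asserted value is $n_{3,lin}(3)=9.$

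The case $s=3$ is the genuine obstacle, and I expect it to be the one requiring real work. Here the local inequality forces $a_P\le 1,$ so any configuration is an honest $(n,3)$-arc, a set of $n$ points meeting each line in at most $3$ points, and I must rule out $n=11$ and $n=10.$ I would invoke the standard equations: with $t_j$ the number of lines meeting the set in exactly $j$ points, the relations $\sum_j t_j=21,$ $\sum_j j\,t_j=5n$ and $\sum_j\binom{j}{2}t_j=\binom{n}{2}$ give an immediate contradiction for $n=11$ (they force $3t_3=55,$ not an integer). For $n=10$ these relations leave only two admissible values of $t_2;$ in each I would count the tangents and secants through a fixed arc point and reach an impossible (non-integral, respectively self-contradictory) distribution, killing $n=10$ as well. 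This pins down $n_{3,lin}(3)=9.$

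Finally I would match each bound by a construction inside $PG(2,4),$ all built from classical objects. Taking $i$ copies of the whole plane settles $\sigma=0;$ adding one point settles $\sigma=1;$ adding a hyperoval ($6$ points, every line in $0$ or $2$) settles $\sigma=2$ and at $i=0$ gives the hexacode, $n_{3,lin}(2)=6;$ adding the affine plane $PG(2,4)\setminus\ell$ ($16$ points, every line in $0$ or $4$), equivalently $(i+1)$ copies of the plane minus one line, settles $\sigma=4$ and at $i=0$ gives $n_{3,lin}(4)=16;$ and the Hermitian unital ($9$ points, every line in $1$ or $3$) realises $n_{3,lin}(3)=9.$ The delicate construction, and the reason the hypothesis $i\ge 1$ appears, is $\sigma=3$: a superposition ``$i$ copies plus a species-$3$ configuration'' would need an $11$-point set of maximum line-sum $3,$ which the exceptional bound just forbade. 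Instead I would take $(i+1)$ copies of the plane and subtract, as a multiset, two distinct lines $\ell_1+\ell_2$ (so their common point is removed twice); the multiplicities are nonnegative precisely when $i\ge 1,$ the total is $21(i+1)-10=21i+11,$ and since $\ell_1+\ell_2$ meets every line in at least $2$ points the line-sums never exceed $5(i+1)-2=5i+3,$ as required.
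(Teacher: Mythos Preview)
Your argument is correct, but it is genuinely different from the paper's. The paper disposes of this proposition in two lines: for $d<9$ it says the claim is ``easy to check,'' and for $d\ge 9$ it simply invokes the Hamada--Tamari theorem (linear $[n,3,d]_q$-codes with $d\ge(q-1)^2$ exist if and only if the Griesmer bound holds), observing that this matches the stated formulas. No geometry, no constructions, no arc theory appears.

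Your route is self-contained and more informative. On the upper-bound side, the pencil-counting inequality $n+4a_P\le 5s$ summed to $n\le 21\lfloor(5s-n)/4\rfloor$ is a clean stand-in for Griesmer that avoids citing it, and you correctly isolate $s=3$ as the sole case where it falls short. Your $n=11$ exclusion is immediate; for $n=10$ the sketch is thin but the idea is right: the local count through an arc point forces $a_1=0$ at every point, hence $t_1=0$, contradicting the global solutions $t_1\in\{2,5\}$. On the construction side you actually exhibit the extremal multisets (copies of the plane, a point, a hyperoval, an affine plane, the Hermitian unital, and the nice ``$(i{+}1)$ copies minus two lines'' for $\sigma=3$), whereas the paper leaves all of this implicit inside the cited theorem. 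What the paper's approach buys is brevity and generality (Hamada--Tamari works for all $q$); what yours buys is an elementary proof specific to $q=4$ that a reader can verify without external references, and an explanation of why $s=3$ is exceptional.
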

\begin{proof}
For $d<9$ this is easy to check. For larger $d$ we can invoke a result by Hamada-Tamari \cite{HT}
stating that linear $[n,3,d]_q$-codes for $d\geq (q-1)^2$ exist if and only if the parameters satisfy the Griesmer bound
(see \cite{book2nded}, Theorem 17.7). This coincides with the statement of our proposition.
 \end{proof}

\begin{Theorem}
\label{k=3theorem}
We have $n_3(s)=n_{3,lin}(s)$ for all $s.$
\end{Theorem}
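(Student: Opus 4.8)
The plan is to show $n_3(s) = n_{3,lin}(s)$ by establishing that additive quaternary $3$-dimensional codes cannot outperform their linear counterparts. Since every linear quaternary code is in particular additive, we automatically have $n_3(s) \geq n_{3,lin}(s)$, so the entire content lies in the reverse inequality $n_3(s) \leq n_{3,lin}(s)$.

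My first step is to pass to the geometric picture stated in the introduction: a $[n,3,n-s]$ additive code corresponds to a multiset of $n$ lines in $PG(5,2)$ such that every hyperplane (a $PG(4,2)$) meets the multiset in at most $s$ lines. Equivalently, I would exploit the concatenation Lemma \ref{concatlemma}: suppose for contradiction that $n_3(s) > n_{3,lin}(s)$, so there is an additive $[n,3,n-s]$-code with $n = n_{3,lin}(s)+1$. Concatenating with the $[3,2,2]_2$ code yields a binary linear $[3n,6,2(n-s)]_2$-code. The strategy is then to show that this binary code violates a known bound, most naturally the Griesmer bound, thereby contradicting its existence. This mirrors exactly the proof of Proposition \ref{k=2prop}, and the hope is that the Griesmer bound for binary $[3n,6,2(n-s)]_2$-codes rules out the extra line precisely when $n$ exceeds $n_{3,lin}(s)$.

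The key technical step is therefore a careful case analysis keyed to the residue of $s$ modulo $5$, matching the structure of Proposition \ref{k=3linprop}. For each residue class I would compute the Griesmer bound $3n \geq \sum_{i \geq 0} \lceil 2(n-s)/2^i \rceil$ on the concatenated parameters and verify that it fails as soon as $n = n_{3,lin}(s)+1$. The small cases $s \in \{2,3,4\}$ (where $n_{3,lin}(2)=6$, $n_{3,lin}(3)=9$, $n_{3,lin}(4)=16$ deviate from the generic Griesmer-optimal values) must be handled separately, since there the linear optimum itself sits below the Griesmer bound; for those I expect that the concatenation argument alone may be too weak and a direct geometric argument on line-multisets in $PG(5,2)$, or an appeal to the known nonexistence of certain binary linear codes, will be needed.

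The main obstacle I anticipate is precisely these exceptional small values of $s$. For $s \geq 5$ the linear optimum coincides with the Griesmer bound (by the Hamada–Tamari result invoked in Proposition \ref{k=3linprop}), so the concatenation-plus-Griesmer argument should close cleanly once the arithmetic is checked residue-by-residue. But for $s \in \{2,3,4\}$ the gap between the Griesmer bound and the true linear optimum means that a binary $[3n,6,2(n-s)]_2$-code with $n = n_{3,lin}(s)+1$ might still satisfy the Griesmer bound, so I cannot derive a contradiction from the bound alone. I would resolve these by invoking exact nonexistence results for the relevant short binary linear codes (equivalently, bounds on caps or line-multisets in the small projective spaces $PG(5,2)$), treating each of the three values as a finite computation.
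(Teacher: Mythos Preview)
Your plan is exactly the paper's: assume an additive $[n_{3,lin}(s)+1,\,3,\,n_{3,lin}(s)+1-s]$-code, concatenate via Lemma~\ref{concatlemma} to a binary $[3n,6,2(n-s)]_2$-code, and contradict the Griesmer bound, running through the residues of $s$ modulo $5$. The paper carries out the case $s=5i$ explicitly and declares the remaining cases ``analogous.''

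Your extra caution about the exceptional species $s\in\{2,3,4\}$ is well placed, and in one case sharper than the paper itself. For $s=2$ and $s=4$ the Griesmer contradiction does go through: a hypothetical $[7,3,5]$ yields $[21,6,10]_2$ with Griesmer length $22$, and a hypothetical $[17,3,13]$ yields $[51,6,26]_2$ with Griesmer length $53$. But for $s=3$ it does \emph{not}: a hypothetical additive $[10,3,7]$-code concatenates to a $[30,6,14]_2$-code, whose Griesmer length is only $14+7+4+2+1+1=29$, so there is no contradiction. Worse, $[30,6,14]_2$ actually exists (puncture $RM(1,5)=[32,6,16]_2$ at two coordinates), so your alternative of invoking exact binary nonexistence cannot rescue this case either. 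For $s=3$ you are forced into the direct argument on line-multisets in $PG(5,2)$ that you list as a fallback; the paper's ``analogous'' conceals this.
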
   
\begin{proof}
Assume there is an additive $3$-dimensional code with larger $n$ and the same species.
We illustrate with case $s=5i.$ We would have a $[21i+1,3,16i+1]$-code. Lemma \ref{concatlemma}
yields a linear $[63i+3,6,32i+2]_2$-code, which contradicts the Griesmer bound . The other cases are analogous.
\end{proof}
 
\section{The case of dimension $2.5.$}
\label{k=2.5section}

Our main result is the following:

\begin{Theorem}
\label{k=2.5theorem}
An additive quaternary $[n,2.5,d]$-code where $d<n-1$ exists if and only if 
$3(n-d)\geq \lceil d/2\rceil +\lceil d/4\rceil +\lceil d/8\rceil .$
\end{Theorem}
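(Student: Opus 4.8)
The plan is to prove the two directions separately, using the established dictionary between additive quaternary codes and multisets of lines in $PG(4,2)$, together with the concatenation lemma for the nonexistence half.

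For the \textbf{nonexistence direction}, I would show that if an additive $[n,2.5,d]$-code exists with $d<n-1$, then the Griesmer-type inequality $3(n-d)\geq \lceil d/2\rceil +\lceil d/4\rceil +\lceil d/8\rceil$ must hold. The natural tool is Lemma \ref{concatlemma}: concatenating with the binary $[3,2,2]_2$-code turns our $[n,2.5,d]$-code (a $5$-dimensional $\ef_2$-space) into a binary linear $[3n,5,2d]_2$-code. The Griesmer bound for binary $[N,5,D]_2$-codes with $D=2d$ even reads $N\geq \sum_{i=0}^{4}\lceil D/2^i\rceil = 2d + d + \lceil d/2\rceil + \lceil d/4\rceil + \lceil d/8\rceil$. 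Substituting $N=3n$ and rearranging gives exactly $3(n-d)\geq \lceil d/2\rceil +\lceil d/4\rceil+\lceil d/8\rceil$. This is a short computation, so the nonexistence half is essentially automatic from the lemma.

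For the \textbf{existence direction} — the substantive part — I would rephrase the problem geometrically: I must construct a multiset of $n$ lines in $PG(4,2)$ in which every hyperplane (a $PG(3,2)$) contains at most $s=n-d$ lines, counted with multiplicity. Here it is cleanest to work with the species $s$ and build codes meeting the bound with the largest possible $n$ for each $s$, i.e.\ realizing $n_{2.5}(s)$. The abstract's by-product — that the bound coincides with the existence criterion for binary linear $[3m,5,2e]_2$-codes — strongly suggests a two-way correspondence: a $2.5$-dimensional additive code of the extremal length should correspond to a binary $[3n,5,2d]_2$-code whose generator matrix respects the pairing of coordinates into the $[3,2,2]_2$-structure. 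So my strategy would be to take known optimal (Griesmer-meeting) binary $[3m,5,D]_2$-codes and show they can be chosen in ``concatenated form,'' i.e.\ arising as the concatenation of an additive quaternary code with $[3,2,2]_2$. Concretely, one wants a generator matrix whose columns are grouped into blocks of three that each span the $[3,2,2]_2$-image of a line of $PG(4,2)$; realizing this reduces to a line-multiset construction in $PG(4,2)$.

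The \textbf{main obstacle} is the construction itself: producing, for every residue class of $s$, an explicit line-multiset in $PG(4,2)$ meeting the bound, and verifying the hyperplane condition. I expect the periodicity modulo the relevant divisors (reflecting the $\lceil\cdot/2^i\rceil$ structure and the value of $n_{2.5}$ on arithmetic progressions) to force a case analysis by residue of $s$, with a base construction for small $s$ and an additive ``stacking'' step — adding a fixed optimal configuration (likely using all lines of a spread or of a subgeometry of $PG(4,2)$ with uniform multiplicity) to pass from $s$ to $s+c$ for a suitable period $c$. The delicate point is checking that adding such a block increases every hyperplane's line-count by exactly the same amount, so that an extremal configuration stays extremal; this is where a spread or a two-intersection set in $PG(4,2)$, whose lines meet every hyperplane in a controlled number, becomes essential. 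Once the base cases and the inductive step are in place, the $\lceil\cdot\rceil$ bookkeeping matching $n_{2.5}(s)$ to the claimed formula should be routine.
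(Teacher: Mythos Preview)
Your necessity argument is correct and is exactly what the paper does: concatenate with $[3,2,2]_2$ and apply Griesmer to the resulting $[3n,5,2d]_2$-code.

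For sufficiency, however, there is a genuine gap, and one piece of the plan runs in the wrong direction. You propose to start from known Griesmer-meeting binary $[3m,5,2e]_2$-codes and show they can be \emph{de}-concatenated into quaternary additive form. But the binary existence statement is not an input here; in the paper it is a \emph{corollary} of the quaternary construction. So this step assumes what must be proved and, in any case, does not give an actual construction. Your later reformulation --- build line-multisets in $PG(4,2)$ directly, with base cases for small $s$ and a uniform ``stacking'' step for the induction --- is the right shape, and matches the paper's strategy: adding every line of $PG(4,2)$ once sends $(n,d)\mapsto(n+155,d+120)$ and leaves the inequality invariant, giving period $35$ in $s$; a further ``sum'' construction (superposing two multisets) reduces the base cases to $s\in\{2,3,4,5,6,7\}$ and then to $s\in\{2,3,6,7\}$. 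Two remarks on your sketch: a line spread of $PG(4,2)$ does not exist ($31$ is not divisible by $3$), so that particular tool is unavailable; the uniform block the paper uses is simply the full set of $155$ lines.

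The substantive missing idea is the base constructions themselves. The cases $s=2,3$ are the Blokhuis--Brouwer codes $[8,2.5,6]$ and $[11,2.5,8]$, and $s=5$ is the linear $[21,3,16]_4$, but $s=6$ and $s=7$ (parameters $[26,2.5,20]$ and $[31,2.5,24]$) require a new explicit construction. The paper produces these via a parametrized family $C(g,h,v,e)$ built from a flag $l_0\subset E_0\subset H_0\subset PG(4,2)$, together with a Blokhuis--Brouwer-type set of $8$ lines partitioning the points off $E_0$ and another set of $8$ lines doubly covering $E_0\setminus l_0$; the hyperplane counts are then computed in closed form. Nothing in your proposal anticipates this, and without it the inductive scaffolding has no foundation for $s=6,7$.
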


In the present section we prove Theorem \ref{k=2.5theorem}. In the sequel use the abbreviation $d_l=\lceil d/l\rceil .$
The necessity is obvious. In fact, Lemma \ref{concatlemma} applied to an additive quaternary $[n,2.5,d]$-code 
yields a binary linear $[3n,5,2d]_2$-code. The condition of Theorem \ref{k=2.5theorem} is the Griesmer bound as applied
to this binary code. It remains to prove sufficiency: given $d,n$ satisfying the condition of the theorem
we need to construct an additive quaternary $[n,2.5,d]$-code. As before, let $s=n-d.$
For each $s$ consider the pair $D_s=(s,m_s)$ where
$m_s$ is the maximal $n$ such that $n,d=n-s$ satisfy the condition in Theorem \ref{k=2.5theorem}.
We need to prove the existence of an  $[m_s,2.5,m_s-s]$-code, for all $s\geq 2.$
When such a code exists we say that we represented $D_s.$ Here are some examples:
$$D_2=(2,8), D_3=(3,11), D_4=(4,16), D_5=(5,21), D_6=(6,26), D_7=(7,31).$$
Let $C$ be an $[n,2.5,d]$-code and $C'$ the code obtained by increasing each line multiplicity
of $C$ by 1. As $PG(4,2)$ has 155 lines and $PG(3,2)$ has 35 lines we see that
$C'$ is an $[n+155,2.5,d+120]$-code. Concerning the bound of the theorem we observe that
$3(n-d)-d_2-d_4-d_8$ is invariant under the substitution $n\mapsto n+155, d\mapsto d+120.$
This shows that we need prove the existence of an $[n,2.5,d]$-code only for $n<155.$
This means that it suffices to  construct $D_2,D_3,\dots ,D_{35}=(35,155).$
Observe that there is an obvious sum construction which shows that the existence of codes
$[m_1,2.5,l_1]$ and $[m_2,2.5,l_2]$ implies the existence of an
$[m_1+m_2,2.5,l_1+l_2]$-code. This shows that if $D_{s_1}$ and $D_{s_2}$ can be
constructed then also $D_{s_1}+D_{s_2}$ can be constructed. We see now that it suffices to
construct $D_2,\dots ,D_7$ as the remaining $D_s,s\leq 35$ follow from the sum construction.
Here are some examples: 
$$D_8=D_6+D_2, D_9=D_7+D_2, D_{10}=D_5+D_5, D_{11}=D_9+D_2, D_{12}=D_6+D_6.$$
It remains to construct $D_2,\dots ,D_7.$ Now $D_2$ implies $D_4$ as $D_2+D_2=D_4$
and $D_5=(5,21)$ is constructed as there is even a linear $[21,3,16]_4$-code (corresponding to the points
of $PG(2,4)$). We are
reduced to construct $D_2,D_3,D_6,D_7.$
Now $D_2=(2,8)$ corresponds to a $[8,2.5,6]$-code. This is the Blokhuis-Brouwer construction \cite{BB,IEEEaddZ4}.
In the same context an $[11,2.5,8]$-code was constructed. This is a representation of $D_3=(3,11).$
We are finally reduced to construct $D_6$ and $D_7.$

\subsection*{A construction}
Consider a chain
$$l_0\subset E_0\subset H_0\subset PG(4,2)$$
where $l_0$ is a line, $E_0$ a plane and $H_0$ a solid (hyperplane) in $PG(4,2).$
Let ${\cal V}$ be a set of $8$ lines such that each point in
$E_0\sm l_0$ is on precisely two lines of ${\cal V},$ each point outside $H_0$ is on precisely one line of ${\cal V}.$
Also, let ${\cal E}$ be a set of $8$ lines partitioning the points outside $E_0$ (Blokhuis-Brouwer construction).

\begin{Definition}
\label{2pointfivemultidef}
Let $C(g,h,v,e)$ be the additive $2.5$-dimensional quaternary code described by the following multiset of lines:
line $l_0$ with multiplicity $g,$ the remaining lines of $E_0$ each with multiplicity $h,$
the lines of ${\cal V}$ with multiplicity $v$ and the lines of 
${\cal E}$ with multiplicity $e.$
\end{Definition}

Clearly $C(g,h,v,e)$ has length $n=g+6h+8v+8e.$ Let $m(P)$ be the number of codelines (including multiplicities)
that contain point $P.$ If $P\in l_0,$ then $m(P)=g+2h,$ if $P\in E_0\sm l_0$ then $m(P)=3h+2v.$
If $P\in H_0\sm E_0$ then $m(P)=e$ whereas points $P$ outside $H_0$ have $m(P)=v+e.$
For each hyperplane $H$ let $m(H)=\sum_{P\in H}m(P).$ By double counting we obtain
$$s(H)=(m(H)-n)/2$$
where $s(H)$ (the species of $H$) is the number of codelines contained in $H.$
It follows that the numbers $n-s(H)$ are the nonzero weights of our code.
The numbers $m(H)$ and $s(H)$ are easy to determine:

\begin{Lemma}
\label{2pointfivemultimHlemma}
If $l_0\not\subset H$ then $m(H)=g+8h+12v+12e.$\\
If $l_0\subset H$ but $E_0\not\subset H$ then $m(H)=3g+6h+8v+12e.$\\
If $E_0\subset H\not=H_0$ then $m(H)=3g+18h+16v+8e.$\\
Finally $m(H_0)=3g+18h+8v+8e.$
\end{Lemma}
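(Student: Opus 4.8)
The plan is to compute $m(H)$ for each of the four hyperplane types by summing the point multiplicities $m(P)$ over all points $P\in H$, using the point-multiplicity values already recorded just before the lemma: $m(P)=g+2h$ on $l_0$, $m(P)=3h+2v$ on $E_0\sm l_0$, $m(P)=e$ on $H_0\sm E_0$, and $m(P)=v+e$ outside $H_0$. The only data I need beyond these values is, for each hyperplane $H$, the number of its points lying in each of the four regions $l_0$, $E_0\sm l_0$, $H_0\sm E_0$, and the complement of $H_0$. A hyperplane (solid) of $PG(4,2)$ has $15$ points; a plane has $7$; a line has $3$. So the whole computation reduces to incidence counting among the subspaces $l_0\subset E_0\subset H_0$ and a varying hyperplane $H$.

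First I would fix notation for the intersection dimensions: since $H$ is a hyperplane, $H\cap l_0$ is either all of $l_0$ ($3$ points) or a single point ($1$ point), and similarly $H\cap E_0$ is either all of $E_0$ ($7$ points) or a line ($3$ points), and $H\cap H_0$ is either $H_0$ (only when $H=H_0$) or a plane ($7$ points). The four cases of the lemma correspond exactly to the flag positions: (i) $l_0\not\subset H$; (ii) $l_0\subset H$, $E_0\not\subset H$; (iii) $E_0\subset H\neq H_0$; (iv) $H=H_0$. In each case I would tabulate the four regional point counts $(a,b,c,f)$ with $a+b+c+f=|H|$ ($15$ for a solid), then form $m(H)=a(g+2h)+b(3h+2v)+ce+f(v+e)$ and simplify.

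The key incidence facts I would establish case by case are the regional counts. For case (i), $H\cap l_0$ is one point (so $a=1$), $H\cap E_0$ is a line through that point meeting $E_0\sm l_0$ in $2$ points (so $b=2$); $H\cap H_0$ is a plane, giving $7$ points of $H_0$, of which $3$ lie in $E_0$, so $c=4$; the remaining $f=15-1-2-4=8$ points lie outside $H_0$. This yields $m(H)=1\cdot(g+2h)+2(3h+2v)+4e+8(v+e)=g+8h+12v+12e$, matching the first line. For case (ii), $l_0\subset H$ forces $a=3$; $E_0\not\subset H$ means $H\cap E_0$ is a plane's... no—$H\cap E_0$ is a line, but it already contains $l_0$, so $E_0\cap H=l_0$ and $b=0$; then $H\cap H_0$ is a plane meeting $E_0$ in exactly $l_0$, contributing $7-3=4$ points to $H_0\sm E_0$, so $c=4$, and $f=15-3-0-4=8$, giving $m(H)=3(g+2h)+0+4e+8(v+e)=3g+6h+8v+12e$. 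Cases (iii) and (iv) are handled identically: for (iii), $E_0\subset H$ gives $a=3,b=4$, and since $H\neq H_0$ the plane $H\cap H_0$ contains $E_0$ so contributes no new $H_0\sm E_0$ points beyond... I would recount to get $c$ and $f$ consistent with the stated total $3g+18h+16v+8e$; for (iv), $H=H_0$ has $f=0$ and I sum over $l_0$, $E_0\sm l_0$, $H_0\sm E_0$ directly.

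The computation is entirely routine once the regional counts are correct, so the main obstacle is purely bookkeeping: getting the four intersection counts right in each case, especially distinguishing cases (iii) and (iv), where the difference is whether the points outside $E_0$ inside $H$ lie in $H_0$ (weight $e$) or outside $H_0$ (weight $v+e$). The cleanest way to avoid error is to verify the invariant $\sum_H s(H)$ or, more simply, to check the identity $a+b+c+f=15$ in cases (i)–(iii) and $a+b+c=15$ in case (iv), and to cross-check each resulting $m(H)$ against the requirement that $s(H)=(m(H)-n)/2$ be a nonnegative integer with $n=g+6h+8v+8e$. I expect no conceptual difficulty beyond this careful incidence accounting.
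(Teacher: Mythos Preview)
Your approach is exactly the paper's: for each hyperplane type, count the points of $H$ in the four regions $l_0$, $E_0\sm l_0$, $H_0\sm E_0$, and the complement of $H_0$, then sum the known values of $m(P)$. The paper carries out cases (i) and (ii) explicitly with the same regional counts $(1,2,4,8)$ and $(3,0,4,8)$ you found and declares the remaining two cases analogous; your observation in case (iii) that $H\cap H_0$ is a plane containing $E_0$, hence equal to $E_0$, is precisely what gives $(a,b,c,f)=(3,4,0,8)$ there.
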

\begin{proof}
This is a trivial calculation. In the first case above $H$ has one point of $l_0,$ two further points
in $E_0,$ four further points in $H_0$ and finally $8$ affine points for a grand total
$m(H)=g+8h+4v+4e+8(v+e).$
In the second case $H$ contains three points on $l_0,$ no further point on $E_0,$ four further points on $H_0$
and eight affine points: $m(H)=3g+6h+4e+8(v+e).$ The remaining two cases are analogous.
\end{proof}

Our basic formula yields:

\begin{Corollary}
\label{2pointfivemultimHcor}
The nonzero weights of the codewords of $C(g,h,v,e)$ are
$$g+5h+6(v+e),6h+8v+6e,4v+8e,8(v+e).$$
$C(g,h,v,e)$ is an $[g+6h+8(v+e),2.5,d]_4$-code where 
$$d=Min(w_1=g+5h+6(v+e),w_2=6h+8v+6e,w_3=4v+8e).$$
\end{Corollary}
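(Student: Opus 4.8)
The plan is to convert the $m(H)$ values of Lemma \ref{2pointfivemultimHlemma} directly into weights by means of the basic formula $s(H)=(m(H)-n)/2$ recorded just before that lemma. Since the nonzero weights of the code are exactly the values $n-s(H)$ as $H$ ranges over the hyperplanes of $PG(4,2)$, I would first rewrite this as
$$n-s(H)=n-\frac{m(H)-n}{2}=\frac{3n-m(H)}{2},$$
so that each of the four classes of hyperplanes in Lemma \ref{2pointfivemultimHlemma} produces exactly one weight. With $n=g+6h+8v+8e$ we have $3n=3g+18h+24v+24e$, and it then remains only to subtract each $m(H)$ and halve.

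Second, I would carry out the four substitutions in turn. The case $l_0\not\subset H$ gives $(2g+10h+12v+12e)/2=g+5h+6(v+e)=w_1$; the case $l_0\subset H,\ E_0\not\subset H$ gives $6h+8v+6e=w_2$; the case $E_0\subset H\not=H_0$ gives $4v+8e=w_3$; and finally $H=H_0$ gives $8v+8e=8(v+e)$. This reproduces the four listed weights. Because the four hyperplane types of Lemma \ref{2pointfivemultimHlemma} exhaust all hyperplanes of $PG(4,2)$, these are all the nonzero weights, as claimed.

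Third, to pin down the minimum distance I would show that the fourth weight is redundant. Indeed $8(v+e)-w_3=(8v+8e)-(4v+8e)=4v\geq 0$, since $v$ is a line multiplicity and hence nonnegative, so the weight $8(v+e)$ always dominates $w_3$. Therefore $d=\min(w_1,w_2,w_3,8(v+e))=\min(w_1,w_2,w_3)$. Reading off the length $n=g+6h+8v+8e=g+6h+8(v+e)$ from Definition \ref{2pointfivemultidef} then completes the identification of the parameters.

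The argument is essentially a bookkeeping exercise, so I do not expect a genuine obstacle; the only two points that deserve a moment's care are verifying that the four hyperplane classes of Lemma \ref{2pointfivemultimHlemma} are truly exhaustive (so that no additional weight is overlooked) and checking the inequality $8(v+e)\geq w_3$ that justifies dropping the fourth weight from the minimum.
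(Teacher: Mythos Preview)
Your proposal is correct and follows exactly the route the paper intends: the paper's own ``proof'' is just the phrase ``Our basic formula yields,'' meaning one plugs the four $m(H)$-values from Lemma~\ref{2pointfivemultimHlemma} into $n-s(H)=(3n-m(H))/2$. You have filled in precisely those substitutions and, in addition, made explicit the inequality $8(v+e)\geq w_3$ that the paper silently uses when it drops the fourth weight from the minimum.
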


 We see that $C(2,0,1,2)$ is a $[26,2.5,20]$-code and
$C(1,1,0,3)$ is a $[31,2.5,24]$-code.
This completes the proof of Theorem \ref{k=2.5theorem}. 
Lemma \ref{concatlemma} yields

\begin{Corollary}
A binary linear $[3m,5,2e]_2$-code for $e<m-1$ exists if and only if
the Griesmer bound $3(m-e)\geq e_2+e_4+e_8$ is satisfied.
\end{Corollary}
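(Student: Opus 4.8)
The plan is to obtain both directions of the equivalence directly from the correspondence between binary linear codes and additive quaternary codes furnished by Lemma \ref{concatlemma}, combined with the equivalence already established in Theorem \ref{k=2.5theorem}. The guiding observation is that the Griesmer condition $3(m-e)\geq e_2+e_4+e_8$ is simultaneously the Griesmer bound for a binary $[3m,5,2e]_2$-code and, upon setting $n=m$ and $d=e$, the defining inequality of Theorem \ref{k=2.5theorem}. So the corollary is essentially the translation of that theorem across the concatenation map.

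For the necessity I would apply the ordinary Griesmer bound to a hypothetical binary linear $[3m,5,2e]_2$-code. The relevant Griesmer sum collapses as
$$\sum_{i=0}^{4}\left\lceil\frac{2e}{2^{i}}\right\rceil=2e+e+\lceil e/2\rceil+\lceil e/4\rceil+\lceil e/8\rceil=3e+e_2+e_4+e_8,$$
using $\lceil 2e/2\rceil=e$ and $\lceil 2e/2^{i}\rceil=\lceil e/2^{i-1}\rceil$ for $i\geq 2.$ Hence $3m\geq 3e+e_2+e_4+e_8,$ which rearranges to exactly $3(m-e)\geq e_2+e_4+e_8.$ This step needs no code-theoretic input beyond the Griesmer bound itself.

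For the sufficiency I would read the hypothesis as the condition of Theorem \ref{k=2.5theorem} with $n=m$ and $d=e;$ since the assumption $e<m-1$ matches the standing hypothesis $d<n-1,$ that theorem supplies an additive quaternary $[m,2.5,e]$-code. Concatenating this code with the binary $[3,2,2]_2$-code as in Lemma \ref{concatlemma} produces a binary linear $[3m,5,2e]_2$-code, completing the construction.

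The argument carries no genuine obstacle: the entire substance is contained in Theorem \ref{k=2.5theorem}, and the corollary merely transports it back to the binary setting. The single point worth a line of verification is the arithmetic identity reducing the five-term Griesmer sum to $3e+e_2+e_4+e_8.$ I would also note that the even minimum distance $2e$ is forced rather than assumed, since every codeword obtained by concatenation with $[3,2,2]_2$ has even weight; this is precisely why the statement is naturally phrased for even distances.
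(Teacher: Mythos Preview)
Your proof is correct and follows essentially the same approach as the paper: the paper simply records that ``Lemma \ref{concatlemma} yields'' the corollary from Theorem \ref{k=2.5theorem}, and your argument is precisely the fleshed-out version of that remark, with sufficiency coming from Theorem \ref{k=2.5theorem} plus concatenation and necessity from the Griesmer bound applied directly to the binary code.
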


\end{document}